\newtheorem{remark}{Remark}
\newcommand{\COLORON}{1}
\newcommand{\NOTESON}{0}
\newcommand{\Debug}{0}
\newcommand{\nted}{nested}
\newcommand{\smA}{\ensuremath{\BS^n \sm A}}
\newcommand{\smZ}{\ensuremath{\BS^n \sm Z}}
\newcommand{\pfl}{piecewise flat}
\newcommand{\comment}[1]{}
\newcommand{\COMMENT}[1]{}
\definecolor{darkgray}{rgb}{0.3,0.3,0.3}
\newcommand{\defi}[1]{{\color{darkgray}\emph{#1}}}
\newtheorem{proposition}{Proposition}[section]
\newtheorem{theorem}[proposition]{Theorem}
\newtheorem{corollary}[proposition]{Corollary}
\newtheorem{lemma}[proposition]{Lemma}
\newtheorem{conjecture}{{Conjecture}}[section]
\newtheorem{problem}[conjecture]{{Problem}}
\newtheorem{question}[conjecture]{{Question}}
\newtheorem{examp}[proposition]{Example}
\newcommand{\FIG}{0}
\newcommand{\note}[1]{ 

\hspace*{-30pt}
	{\color{blue}  NOTE: \color{Turquoise}{\small  \tt \begin{minipage}[c]{1.1\textwidth}  #1 \end{minipage} \ignorespacesafterend }} 
	
	}
\else \newcommand{\note}[1]{} \fi
\newcommand{\afsubm}[1]{ \ifnum \Debug = 1 {\mymargin{#1}}
\fi} 
\newcommand{\fig}[1]{Figure ``{#1}''}
\else \newcommand{\fig}[1]{Figure~\ref{#1}} \fi
\renewcommand{\color}[1]{}
\newcommand{\N}{\ensuremath{\mathbb N}}
\newcommand{\R}{\ensuremath{\mathbb R}}
\newcommand{\Z}{\ensuremath{\mathbb Z}}
\newcommand{\BS}{\ensuremath{\mathbb S}}
\newcommand{\cb}{\ensuremath{\mathcal B}}
\newcommand{\cc}{\ensuremath{\mathcal C}}
\newcommand{\cu}{\ensuremath{\mathcal U}}
\newcommand{\sm}{\backslash}
\newcommand{\cls}[1]{\ensuremath{\overline{#1}}}
\newcommand{\ocirc}[1]{\ensuremath{\accentset{\circ}{#1}}}
\DeclareRobustCommand{\cev}[1]{%
  \mathpalette\do@cev{#1}%
}
\newcommand{\do@cev}[2]{%
  \fix@cev{#1}{+}%
  \reflectbox{$\m@th#1\vec{\reflectbox{$\fix@cev{#1}{-}\m@th#1#2\fix@cev{#1}{+}$}}$}%
  \fix@cev{#1}{-}%
}
\newcommand{\fix@cev}[2]{%
  \ifx#1\displaystyle
    \mkern#23mu
  \else
    \ifx#1\textstyle
      \mkern#23mu
    \else
      \ifx#1\scriptstyle
        \mkern#22mu
      \else
        \mkern#22mu
      \fi
    \fi
  \fi
}
\newcommand{\nin}{\ensuremath{{n\in\N}}}
\newcommand{\Cg}{Cayley graph}
\newcommand{\Lr}[1]{Lemma~\ref{#1}}
\newcommand{\Tr}[1]{Theorem~\ref{#1}}
\newcommand{\Sr}[1]{Section~\ref{#1}}
\newcommand{\Prr}[1]{Pro\-position~\ref{#1}}
\renewcommand{\iff}{if and only if}
\newcommand{\fe}{for every}
\newcommand{\st}{such that}
\newcommand{\ti}{there is}
\newcommand{\sico}{simply connected}
\newcommand{\pl}{piecewise linear}
\newcommand{\CtS}{Cantor 3-sphere}
\newcommand{\labtequ}[2]{
 \begin{equation} \label{#1} 	\begin{minipage}[c]{0.9\textwidth}  #2 \end{minipage} \ignorespacesafterend \end{equation} }
\newcommand{\mymargin}[1]{
 \ifnum \Debug = 1
  \marginpar{%
    \begin{minipage}{\marginparwidth}\small%
      \begin{flushleft}%
        {\color{blue}#1}%
      \end{flushleft}%
   \end{minipage}%
  }%
 \fi
}%
\newcommand{\mySection}[2]{}
\begin{document}
	\title{Every countable compact subset of $\mathbb{S}^n$ is tame}
	
\author{Agelos Georgakopoulos\thanks{Supported by  EPSRC grants EP/V048821/1 and EP/V009044/1.}}
\affil{  {Mathematics Institute}\\
 {University of Warwick}\\
  {CV4 7AL, UK}}

\date{\today}

\maketitle

\begin{abstract}
We prove that any two countable, compact, subsets of $\BS^n, n\geq 2$ that are homeomorphic also have homeomorphic complements. Thus any wild subspace like the classical construction of Antoine must contain a Cantor set.
\end{abstract}

{\noindent \bf{Keywords:} }Cantor set, tame, wild, ambiently  homeomorphic, strongly homogeneously embedded. \\
{\bf{MSC 2020 Classification:}} 57N45, 57N35, 54C20. \\

\section{Introduction}

Just over a century ago, Antoine \cite{Antoine,AntoineT} 
found a homeomorph $A$ of the Cantor set inside the 3-sphere $\BS^3$ the complement $\BS^3 \sm A$ of which is not homeomorphic to the complement of a \defi{standard} Cantor set in $\BS^3$, i.e.\ one contained in a linear arc inside $\BS^3$. Antoine's discovery triggered a wealth of constructions of wild Cantor sets in $\BS^n$ with complements having various properties \cite{BeCoWil,CaWrSli,DavEmb,DeGOsb,GaReZeUnc,GaReZeRig,Kirkor,Sher,SkoCan}. In particular, Blankinship \cite{Blankinship} generalised Antoine's construction to all dimensions $n\geq 3$. Antoine's work also led Alexander to the discovery of his horned sphere \cite{Alexander}, and there are interesting connections between such spheres and wild Cantor sets \cite{DavWil}.

The aim of this note is to show that such constructions are not possible, in any dimension, if we replace the Cantor set by any topological space that does not contain it as a subspace. Our main result is

\begin{theorem} \label{main thm}
Let $A,Z$ be two countable, compact, subsets of $\BS^n, n\geq 2$, and let $h: A \to Z$ be a homeomorphism. Then there is an automorphism $h'$ of $\mathbb{S}^n$ extending $h$. 

Moreover, $h'$ can be chosen to be orientation reversing (or preserving).
\end{theorem}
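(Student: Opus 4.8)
The plan is to prove, by transfinite induction on the Cantor--Bendixson rank, a \emph{relative} strengthening: \emph{if $B$ is an open round ball in $\BS^n$ (or $B=\BS^n$), if $A,Z\subseteq B$ are countable compact, and if $h\colon A\to Z$ is a homeomorphism, then $h$ extends to a homeomorphism of $\BS^n$ that is the identity on $\BS^n\sm B$}; the theorem is the case $B=\BS^n$. Recall that a countable compact Hausdorff space is scattered, so the derived sets $A=A^{(0)}\supseteq A^{(1)}\supseteq\cdots$ stabilise at $\emptyset$ at a countable successor ordinal $\gamma+1$ with $A^{(\gamma)}$ a nonempty finite set; write $\mathrm{rk}(A)=\gamma$. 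As $h$ induces homeomorphisms $A^{(\beta)}\to Z^{(\beta)}$, we get $\mathrm{rk}(A)=\mathrm{rk}(Z)$ and $|A^{(\gamma)}|=|Z^{(\gamma)}|$. The base case $\gamma=0$ is classical: for $n\ge2$ any bijection between two finite subsets of the same size of $B\cong\R^n$ extends to a homeomorphism supported in $B$, since $\mathrm{Homeo}(\R^n)$ acts $k$-transitively. Throughout the induction I use the following facts, each elementary for $n\ge2$ and false on $\BS^1$ (where cyclic order obstructs everything): (i)~$\BS^n$ minus a countable set is connected; (ii)~finitely many, or a null sequence of, pairwise disjoint compacta can be pushed by an ambient homeomorphism into pairwise disjoint round balls (cover each by finitely many small balls, slide them together along thin tubes, i.e.\ take a regular neighbourhood of a tree, then shrink radially; in the null-sequence case keep the balls converging to the limit point and fix that point); (iii)~two null sequences of pairwise disjoint round balls converging to points $p,p'$ can be matched by an ambient homeomorphism taking $p$ to $p'$ (shrink the balls to near-points, match centres, adjust radii); (iv)~a Baire-category/general-position argument: in the Polish group of homeomorphisms supported in $B$ and fixing a given finite set $F$, a comeagre set move $A\sm F$ off $Z\sm F$.

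\textbf{Reducing to one top point.} Let $\gamma=\mathrm{rk}(A)\ge1$, $A^{(\gamma)}=\{a_1,\dots,a_k\}$, $Z^{(\gamma)}=\{z_1,\dots,z_k\}$, $h(a_i)=z_i$. By $k$-transitivity apply a homeomorphism supported in $B$ so that $a_i=z_i$; by~(iv) apply a further one supported in $B$ and fixing all $a_i$ so that $A\cap Z=\{a_1,\dots,a_k\}$. Choose a clopen partition $A=D_1\sqcup\cdots\sqcup D_k$ with $a_i\in D_i$ ($A$ is compact zero-dimensional), and put $E_i:=h(D_i)$; then $Z=E_1\sqcup\cdots\sqcup E_k$, $a_i\in E_i$, and $D_i^{(\gamma)}=E_i^{(\gamma)}=\{a_i\}$. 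Since $A\cap Z=\{a_i\}$ one checks $D_i\cap E_j=\emptyset$ for $i\ne j$, so the compacta $D_i\cup E_i$ are pairwise disjoint and can be pushed by~(ii) into pairwise disjoint round balls $B_i\subseteq B$. It remains to extend each $h|_{D_i}$ — now a homeomorphism between subsets of the single ball $B_i$, each with unique top point $a_i$ and of rank $\gamma$ — to a homeomorphism supported in $B_i$, and glue; the finitely many pieces glue to a homeomorphism with no continuity issues, and composing back the earlier homeomorphisms gives the extension.

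\textbf{The single-top-point case.} So assume $A,Z\subseteq B$ with $A^{(\gamma)}=\{p\}=Z^{(\gamma)}$, $h(p)=p$, and the inductive hypothesis for all ranks $<\gamma$. Take a clopen partition $A=\{p\}\sqcup\bigsqcup_{m\ge1}A_m$ with $\mathrm{diam}(A_m\cup\{p\})\to0$ (nested clopen neighbourhoods of $p$); each $A_m$ is closed in $\BS^n$ and misses $A^{(\gamma)}=\{p\}$, so $A_m^{(\gamma)}=\emptyset$, i.e.\ $\mathrm{rk}(A_m)<\gamma$. Put $Z_m:=h(A_m)$; by uniform continuity of $h$ these are disjoint compacta converging to $p$, of rank $<\gamma$. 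By~(ii) there is a homeomorphism $\psi$ supported in $B$ fixing $p$ with $\psi(A_m)\subseteq V_m$ for pairwise disjoint round balls $V_m\to p$, and likewise $\psi'$ with $\psi'(Z_m)\subseteq V_m'$; by~(iii) a homeomorphism $\Phi$ supported in $B$ fixing $p$ with $\Phi(V_m)=V_m'$. Replacing $A,Z,h$ by $\Phi\psi(A)$, $\psi'(Z)$, $\psi'h(\Phi\psi)^{-1}$, the map $h$ restricts to a homeomorphism $A_m\to Z_m$ between subsets of the same ball $V_m'$ of rank $<\gamma$, which by the inductive hypothesis extends to a homeomorphism $g_m$ of $\BS^n$ supported in $V_m'$. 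Set $G:=g_m$ on $\overline{V_m'}$ and $G:=\mathrm{id}$ off $\bigcup_m V_m'$. Then $G$ is a bijection fixing $p$, continuous at $p$ because $\mathrm{diam}(V_m')\to0$ and $V_m'\to p$ and continuous elsewhere trivially, hence a homeomorphism (continuous bijection of a compact space onto $\BS^n$). It extends $h$ and is supported in $B$; composing back $\psi,\psi',\Phi$ closes the induction. The delicate point — the likely main obstacle — is exactly this bookkeeping: forcing the pieces of $A$ and of $Z$ to land in the \emph{same} disjoint balls carrying a matching homeomorphism of rank $<\gamma$, so the inductive hypothesis applies and the glued map stays continuous at the single accumulation point; this is where $n\ge2$ is essential, through~(i)--(iv).

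\textbf{The orientation clause.} Having produced \emph{some} automorphism $h'$ extending $h$, it suffices to exhibit an orientation-reversing automorphism $\rho$ of $\BS^n$ with $\rho|_Z=\mathrm{id}$: then exactly one of $h'$, $\rho h'$ has each orientation. Such a $\rho$ follows from the (non-oriented) theorem just proved: $Z$, being countable compact metric, embeds in $\R$ and hence in a great $(n-1)$-subsphere $E\subseteq\BS^n$, giving a copy $Z'\subseteq E$ of $Z$; by the theorem there is an automorphism $\theta$ with $\theta(Z)=Z'$, and $\rho:=\theta^{-1}\circ\sigma_E\circ\theta$, with $\sigma_E$ the linear reflection fixing $E$ pointwise, is orientation-reversing and fixes $Z$ pointwise.
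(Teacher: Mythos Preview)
Your argument is correct and takes a genuinely different route from the paper's. The paper never invokes Cantor--Bendixson rank; instead it first shows (Lemma~2.1) that every countable subspace of a metric space admits a \emph{nested} basis of open metric balls, and then runs a direct back-and-forth: at odd steps it covers the as-yet-unmapped part of $A$ by finitely many small nested-basis balls, joins the $h$-images of each by thin tubes into a single PL topological ball on the $Z$ side, and extends the partial homeomorphism across the resulting punctured-sphere region via a punctured-sphere lemma (Proposition~2.2); even steps swap the roles of $A$ and $Z$. This buys extra generality: the construction only needs nested ball-bases, not countability, so it yields Theorem~3.1 and in particular recovers that every tame Cantor set is strongly homogeneously embedded --- a conclusion your rank induction, being tied to scatteredness, cannot reach. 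Conversely, your approach is structurally cleaner for the countable case and isolates the geometric content into the four facts (i)--(iv); it is worth noting that your (ii) in the null-sequence case implicitly uses that the compacta $A_m$ are themselves countable, so that one can choose concentric spheres about $p$ missing $A$ and thereby reduce to the finite case annulus by annulus --- the paper's tube construction is essentially the same device. Your handling of the orientation clause --- first proving the unoriented statement, then bootstrapping by moving $Z$ into a great $(n-1)$-sphere and conjugating the linear reflection --- is different from and arguably more elegant than the paper's, which simply chooses the orientation of the seed homeomorphism $h_0$ on a small ball at step~$0$ and observes that it propagates through every application of Proposition~2.2.
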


It is well-known that every uncountable closed subspace of a Polish ---i.e.\ completely metrizable and separable--- space contains a perfect set, and therefore a homeomorph of the Cantor set \cite{Kechris}. Combined with \Tr{main thm}, and the aforementioned constructions of \cite{Antoine,Blankinship}, this means that the Cantor set  is the smallest space $X$ \st\ that there are two homeomorphs of $X$ in $\BS^n, n\geq 2$ with non-homeomorphic complements.

Thus the countability condition  of \Tr{main thm} is indispensable. To see that the compactness conditions is also necessary, consider $A,Z\subset \mathbb{S}^n$ that are both homeomorphic to $\mathbb{Q}$, so that $A$ is dense in $\mathbb{S}^n$ while $B$ is not\footnote{I thank Wojtek Wawrow for this example.}. 

For $n=2$, the first statement of \Tr{main thm} can be deduced from a classical result of Richards'  \cite{Richards}, which applies more generally to any totally disconnected, compact, subspace of $\mathbb{S}^2$. In particular, Richards' result implies that every Cantor set $C \subset \mathbb{S}^2$ is \defi{tame}, i.e.\ $\mathbb{S}^2 \sm C$ is homeomorphic to the complement of a standard Cantor set in  $\mathbb{S}^2$.

\medskip

As an example application of \Tr{main thm}, we can use it to answer the following question that became popular in on-line forums \cite{SE_bijection_Z2,MO_ext_hom}:
\begin{question} \label{Q Z}
Does every bijection of $\Z^2$
extend to a homeomorphism of $\R^2$?
\end{question}
It may look surprising at first sight that e.g.\ the bijection of $\Z^2$ that coincides with the identity in the upper half-plane and coincides with the bijection $(n,m)\mapsto (-n,m)$ in the lower half-plane extends to a homeomorphism $h$ of $\R^2$. This may look more surprising if one takes into account that $h$ would have to map the standard \Cg\ $G$ of  $\Z^2$ ---i.e.\ the square lattice--- onto a quite distorted image $h(G)$. But \Tr{main thm}, applied with $A=Z$ being the 1-point compactification of $\Z^2$ says that it is always possible, and in any dimension. Moreover, the second statement says that it is possible to extend the identity on $\Z^2$ to an orientation reversing homeomorphism $h$ of $\R^2$. Again, it is difficult to imagine the image $h(G)$ of the square lattice: its vertices are fixed, but for every vertex $v\in \Z^2$, the clockwise cyclic ordering of its edges is reversed. (Conversely, given such an embedding $h(G)$ on the square lattice, one can deduce the existence of the desired automorphism of $\R^2$ by applying the Jordan--Schoenfliess theorem to each of the faces of $h(G)$; see \cite{ThomassenRichter} for a more general statement.)

\medskip

Proving \Tr{main thm} would be easy if we could find bases $\cb^A,\cb^Z$ for the topologies of $A,Z$, ideally induced by open metric balls of $\BS^n$, \st\ $h$ maps each element of $\cb^A$ to one of $\cb^Z$. In \Sr{sec nted} we define a notion of  \defi{\nted} basis, which is possible to find in every  countable, compact, metric space (\Lr{lem nted}), that will allow us to reduce to the aforementioned ideal situation;  $h$ will map each element of $\cb^A$ to a finite disjoint union of elements of $\cb^Z$, which union is always possible to engulf inside a single topological ball. The rest of the argument is an adaptation of an idea of Richards that ping-pongs between closed submanifolds of $A,Z$ bordered by topological spheres.


We  prove \Tr{main thm} in the more general setup where countability is relaxed to the existence of a \nted\ basis (\Tr{main thm unct}). As every tame Cantor set $C\subset \BS^n$ has a  \nted\ basis, we recover the well-known fact that $C$ is strongly homogeneously embedded, i.e. every automorphism of $C$ extends to an automorphism of $\BS^n$. 


\comment{
	\begin{corollary} \label{cor C}
Let $C$ be a tame Cantor set in $\BS^n$. Then there is an automorphism of $\BS^n$ that fixes $C$ pointwise and reverses the orientation.
\end{corollary}
}

\section{Preliminaries and proof ingredients} \label{sec nted}

Let $X$ be a topological space, and \cu\ a set of open subsets of $X$. We say that \cu\ is \defi{\nted}, if \fe\ $U,V\in \cu$ we have either $\cls{U} \cap \cls{V} = \emptyset$, or $U\subseteq V$, or $V\subseteq U$.

An \defi{open metric ball}  of a metric space  $(Y,d)$ is a set of the form $B_r(x):= \{y \in Y \mid d(x,y)<r \}$.

\begin{lemma} \label{lem nted}
Let $(Y,d)$ be a metric space, and $X\subseteq Y$ a countable 
subspace. Then \ti\ a nested set of open metric balls of $Y$ that form a basis of the topology of $X$ when restricted to it. 
\end{lemma}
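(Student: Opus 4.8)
The plan is to build $\cu$ by a stage-by-stage recursion, as a countable family of open balls of $Y$ all centred at points of $X$. Fix an enumeration $X=\{x_1,x_2,\dots\}$ (if $X$ is finite, replace $\N$ below by a finite index set, the argument being the same) and a bijection $\N\to\N\times\N$, $m\mapsto(i_m,n_m)$. At stage $m$ I add one ball $B_m=B_{r_m}(x_{i_m})$, with radius $r_m>0$ chosen small enough that $B_m$ is \nted\ with each of $B_1,\dots,B_{m-1}$ and, moreover, $r_m<2^{-n_m}$. Because being \nted\ is a condition on \emph{pairs} of sets, checking each new ball against all the earlier ones makes the whole family $\cu=\{B_m : m\in\N\}$ \nted. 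Granting the construction, the basis property comes for free: a ball $B_r(x)$ of radius $r<\eps$ satisfies $B_r(x)\cap X\subseteq B_\eps(x)\cap X$, so given $x=x_i\in X$ and a neighbourhood $W$ of $x$ in $X$ we may pick $n$ with $B_{2^{-n}}(x)\cap X\subseteq W$ and let $m$ be such that $(i_m,n_m)=(i,n)$; then $x\in B_m$ and $B_m\cap X\subseteq W$, which is exactly what is needed for $\{U\cap X : U\in\cu\}$ to be a basis of $X$.

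So everything rests on choosing the radii. The underlying dichotomy is elementary: for balls $B_r(x)$, $B_s(y)$ of $Y$, writing $a:=d(x,y)$, one checks directly that if $a>s$ then $\cls{B_r(x)}\cap\cls{B_s(y)}=\emptyset$ as soon as $r<a-s$, while if $a<s$ then $B_r(x)\subseteq B_s(y)$ as soon as $r<s-a$. Hence, provided $a\neq s$, \emph{every sufficiently small} ball about $x$ is \nted\ with $B_s(y)$. I expect the one genuinely delicate point to be the borderline case $a=s$ with $x\in\cls{B_s(y)}$: there, no small ball $B_r(x)$ is \nted\ with $B_s(y)$, since it cannot be contained in $B_s(y)$ (it contains $x\notin B_s(y)$), it is too small to contain $B_s(y)$ (it omits the centre $y$ whenever $r\le s$), and its closure meets $\cls{B_s(y)}$ (both contain $x$). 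This is exactly why I center all balls at points of $X$: when the ball $B_{r_m}(x_{i_m})$ is created I additionally require that $r_m\notin\{d(x_i,x_{i_m}) : i\in\N,\ i\neq i_m\}$ --- only countably many forbidden values --- which ensures that no centre used at any later stage lies at distance exactly $r_m$ from $x_{i_m}$, i.e.\ the borderline case never arises.

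With that provision in force, the recursion is routine. At stage $m$: for each $k<m$ with $x_{i_k}=x_{i_m}$ the balls $B_m$ and $B_k$ will be concentric, hence automatically \nted; for each $k<m$ with $x_{i_k}\neq x_{i_m}$, the requirement imposed at stage $k$ gives $d(x_{i_m},x_{i_k})\neq r_k$, so $\delta_k:=|d(x_{i_m},x_{i_k})-r_k|>0$ and, by the dichotomy above, any positive radius below $\delta_k$ makes $B_m$ \nted\ with $B_k$. Thus it suffices to pick
\[
 0<r_m<\min\Big(\{2^{-n_m}\}\cup\{\delta_k : k<m,\ x_{i_k}\neq x_{i_m}\}\Big),\qquad r_m\notin\{d(x_i,x_{i_m}) : i\in\N,\ i\neq i_m\},
\]
which is possible because we are deleting only countably many points from a non-empty open subinterval of $\R_{>0}$. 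One then records, as sketched in the first paragraph, that $\cu$ restricted to $X$ is a basis of its topology; the whole argument uses only countably many successive choices. The main obstacle, as indicated, is the borderline configuration of the dichotomy, and the device of restricting radii so as to avoid a countable set --- legitimate precisely because keeping all potential centres inside $X$ keeps that set countable --- is what disposes of it.
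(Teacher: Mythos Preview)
Your proof is correct. The dichotomy you isolate is sound: for $a=d(x,y)$, if $a>s$ then $r<a-s$ forces $\cls{B_r(x)}\cap\cls{B_s(y)}=\emptyset$ via the triangle inequality applied to any point in the intersection, and if $a<s$ then $r\le s-a$ forces $B_r(x)\subseteq B_s(y)$. Your device of forbidding $r_m$ from the countable set $\{d(x_i,x_{i_m}):i\neq i_m\}$ guarantees that the borderline $a=s$ never occurs between any two centres you will ever use, so the dichotomy always applies; the concentric case is trivially nested. The basis property follows from $r_m<2^{-n_m}$ and the surjectivity of $m\mapsto(i_m,n_m)$.

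Your route is genuinely different from the paper's. The paper proceeds \emph{level by level}: it first builds a single cover of $X$ by pairwise disjoint balls of radius below some $r_0$ (greedily, around successive uncovered points), then recurses inside each ball to get a finer disjoint cover, and takes the union of all levels. Nestedness there is structural---balls at the same level are disjoint by construction, balls at different levels satisfy containment because each refinement is carried out inside a single parent ball. Your argument instead interleaves all scales in one pass and makes the \nted ness of each new ball with \emph{every} earlier one explicit through the metric dichotomy. Both proofs ultimately exploit countability of $X$ to dodge countably many bad radii, but the paper phrases this as ``$\partial B_r(x)\cap X=\emptyset$'' while you phrase it as ``$r\neq d(x,x')$ for $x'\in X$''; yours is the slightly stronger (hence safer) condition. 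The paper's hierarchical output is more directly suited to the downstream application, where one wants covers at each scale; your construction is leaner and more self-contained as a proof of the lemma itself.
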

\begin{proof}
Given some constant $r_0\in \R_{>0}$, we can produce a \nted\ cover $\cu= \cu^{r_0}$ of $X$ by disjoint open balls of $Y$ of radius at most $r_0$ as follows. Let $\{x_0, x_1, \ldots \}$ be an enumeration of $X$, and choose $r<r_0$ \st\ the boundary of the ball $\cu_0= B_r(x_0)$ is disjoint from $X$. This is possible because $X$ is countable and there are uncountably many values of $r$ to  choose from. Proceed inductively, letting $\cu_i, i=1,2,\ldots$ be a ball, of radius less than $r_0$, around the next still uncovered element of $\{x_1, x_2, \ldots \}$, \st\ $\cu_i$ is disjoint from all $\cu_j, j<i$, and $\partial \cu_i \cap X=\emptyset$. 

This completes the definition of a cover $\cu^{r_0}$ of $X$. We proceed to define a sequence of such covers $\cu^{r_i}$, where $r_i \to 0$ inductively: in each step $i$, we repeat the above construction, with $X$ replaced by its intersection with  each element $U$ of $\cu^{r_{i-1}}$, to obtain a refinement $\cu^{r_{i}}$ of $\cu^{r_{i-1}}$. Nestedness is easily preserved by our inductive step, and so the union $\bigcup_{i\in \N} \cu^{r_i}$ yields a \nted\ basis of $X$ as desired.
\end{proof}

Fix some $n>1 \in \N$ for the rest of this paper. A \defi{(closed) $k$-punctured sphere} in $\BS^n$ is a connected sub-manifold with $k\in \N_{>0}$ boundary components, each of which is  \pfl\  and homeomorphic to  $\BS^{n-1}$. By a \defi{\pfl} subspace here we mean a finite union of open metric balls. We will prove \Tr{main thm} by decomposing \smA\ (and \smZ) as a union of punctured spheres with non-intersecting interiors, and applying the following basic fact to each of them.

\begin{proposition} \label{hom pun}
Let $M,M'$ be two $k$-punctured spheres in $\BS^n$, and $f$ a homeomorphism from one of the boundary components of $M$ onto one of the boundary components of $M'$. Then there is a homeomorphism $f'$ from $M$ onto $M'$ extending $f$. Moreover, $f'$ preserves orientation \iff\ $f$ does. 
\end{proposition}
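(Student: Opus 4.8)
The plan is to reduce the statement to the well-understood model of a standard $k$-punctured sphere and to proceed in three stages: first handle the case $k=1$ (a single boundary sphere, so $M$ is a closed ball), then bootstrap to general $k$, and finally track orientations throughout.

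For $k=1$, a $1$-punctured sphere $M\subseteq\BS^n$ is a compact connected submanifold whose boundary is a single piecewise flat $(n-1)$-sphere $S$; by the generalized Schoenflies theorem (the boundary sphere is bicollared, being piecewise flat) $M$ is homeomorphic to the closed $n$-ball $\D^n$, and likewise $M'\cong\D^n$. So the task is: given a homeomorphism $f\colon\partial M\to\partial M'$, i.e.\ essentially a self-homeomorphism $g$ of $\BS^{n-1}$ after identifying both boundaries with the standard sphere, extend it to a homeomorphism $M\to M'$. This is the classical Alexander trick: view $\D^n$ as the cone $C(\BS^{n-1})$ and set $f'(tx)=t\,g(x)$ for $t\in[0,1]$, $x\in\BS^{n-1}$, which is a homeomorphism of $\D^n$ restricting to $g$ on the boundary. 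Composing with the Schoenflies homeomorphisms $M\cong\D^n\cong M'$ (chosen to be compatible with $f$ on the boundary) gives the desired $f'$.

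For general $k$, I would realize a $k$-punctured sphere up to homeomorphism as $\BS^n$ with $k$ disjoint open piecewise flat balls removed; by an isotopy of $\BS^n$ one may assume these are $k$ disjoint round balls, so the standard model $N_k$ is the same for $M$ and $M'$. Fix homeomorphisms $\phi\colon M\to N_k$ and $\psi\colon M'\to N_k$. The boundary components of $N_k$ are the $k$ spheres $\partial B_1,\dots,\partial B_k$; after relabeling we may assume $f$ relates the component of $M$ mapped by $\phi$ to $\partial B_1$ with the component of $M'$ mapped by $\psi$ to $\partial B_1$. Now $\psi\circ f\circ\phi^{-1}$ is a self-homeomorphism of $\partial B_1$; extend it first to a self-homeomorphism of the closed ball $\cls{B_1}$ by the Alexander trick, then to all of $\BS^n$ by the identity outside $B_1$, and finally restrict to $N_k$. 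This yields a self-homeomorphism $r$ of $N_k$ agreeing with $\psi\circ f\circ\phi^{-1}$ on $\partial B_1$, and $f':=\psi^{-1}\circ r\circ\phi$ is a homeomorphism $M\to M'$ extending $f$.

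The orientation claim is handled by bookkeeping: the Schoenflies/model homeomorphisms $\phi,\psi$ can each be chosen orientation preserving (or their orientation recorded), so the orientation behaviour of $f'$ is that of $r$, which equals that of the Alexander extension of $\psi\circ f\circ\phi^{-1}$ on $B_1$; and the Alexander-trick cone extension preserves orientation iff the boundary map does. Hence $f'$ preserves orientation iff $f$ does. The main obstacle is the reduction to a standard model in the topological (rather than smooth or PL) category — i.e.\ justifying that any $k$-punctured sphere with piecewise flat boundary is homeomorphic to $N_k$ by a homeomorphism one can take to be the identity near the relevant boundary component; this rests on the generalized Schoenflies theorem together with the fact that disjoint locally flat (here piecewise flat) spheres in $\BS^n$ can be simultaneously standardized, which I would invoke as a known result. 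Everything else is the Alexander trick plus careful composition.
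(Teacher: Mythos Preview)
Your approach---reduce both $M$ and $M'$ to a standard $k$-punctured model and then compose---is exactly the route the paper's sketch takes, and the added detail (Schoenflies for $k=1$, the Alexander trick for matching the prescribed boundary data) is the natural way to flesh it out. There is, however, a genuine slip in the general-$k$ step.

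You extend $g:=\psi\circ f\circ\phi^{-1}\colon\partial B_1\to\partial B_1$ over $\cls{B_1}$ by coning, and then declare the map to be the identity on $\BS^n\setminus B_1$. These two prescriptions do \emph{not} agree on $\partial B_1$: the first gives $g$ there, the second gives the identity, so the resulting ``homeomorphism of $\BS^n$'' is discontinuous unless $g=\mathrm{id}$. In particular, its restriction to $N_k\subseteq\BS^n\setminus B_1$ is just the identity, and then $f'=\psi^{-1}\circ\phi$ has no reason to extend $f$. The Alexander trick is being applied on the wrong side of $\partial B_1$: the ball $\cls{B_1}$ lies \emph{outside} $N_k$, so extending into it buys you nothing for a self-map of $N_k$.

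The fix is to push $g$ into $N_k$ instead. Choose a slightly larger round ball $B_1^+\supset\cls{B_1}$ still disjoint from $B_2,\dots,B_k$; the closed annulus $\cls{B_1^+}\setminus B_1\cong \BS^{n-1}\times[0,1]$ lies in $N_k$. Since $\mathrm{Homeo}^+(\BS^{n-1})$ is path-connected (itself a consequence of the Alexander trick), take an isotopy $(g_t)_{t\in[0,1]}$ from $g_0=g$ to $g_1=\mathrm{id}$ (after first composing with a reflection if $g$ reverses orientation), set $r(x,t)=(g_t(x),t)$ on the annulus and $r=\mathrm{id}$ on the rest of $N_k$. This $r$ is the self-homeomorphism of $N_k$ you actually need, and your orientation bookkeeping then goes through unchanged.
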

\begin{proof}[Proof (sketch)]
Continuously deform each of $M,M'$ to a \emph{standard}  $k$-punctured sphere, i.e.\ one with its boundary components having specific sizes and shapes, and lying in specific locations. Then compose one of these deformations with the inverse of the other.
\end{proof}

The condition of \pfl ness that we imposed on the boundary components of $M, M'$ is probably stronger than needed, but some condition is necessary to avoid obstructions such as Alexander's horned sphere, a subset of $\BS^3$  homeomorphic to $\BS^2$, the two sides of which are not homeomorphic to each other \cite{Alexander}, \cite[p.~169]{Hatcher}.

\medskip
An \defi{automorphism} of a topological space $Y$ is a homeomorphism from $Y$ onto itself.

\section{Proofs of main results}

\Tr{main thm} is an immediate consequence of the following statement, which replaces countability by the weaker ---by \Lr{lem nted}--- condition of having \nted\ bases consisting of open balls.

\begin{theorem} \label{main thm unct}
Let $A,Z$ be two compact subsets of $\BS^n, n\geq 2$ that admit \nted\ bases $\cb^A,\cb^Z$ consisting of open metric balls of $\BS^n$, and let $h: A \to Z$ be a homeomorphism. Then there is an automorphism $h'$ of $\mathbb{S}^n$ extending $h$. 

Moreover, $h'$ can be chosen to be orientation reversing (or preserving).
\end{theorem}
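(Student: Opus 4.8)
The plan is to mimic Richards' ping-pong argument, but working with the nested bases $\cb^A,\cb^Z$ instead of a manifold decomposition directly. First I would set up the inductive scaffolding. Using \Lr{lem nted} I may assume $\cb^A,\cb^Z$ are nested collections of open balls. By passing to subfamilies I would arrange each basis into ``levels'': a finite level-$0$ family whose members have disjoint closures and cover $A$ (resp.\ $Z$), then inductively a level-$(i+1)$ family refining level $i$, with radii tending to $0$, so that for each level-$i$ ball $U$ the level-$(i+1)$ balls contained in $U$ have pairwise disjoint closures and their union is dense in $U\cap A$. The nestedness hypothesis is exactly what guarantees the closures at a given level are either nested or disjoint, which lets me treat the boundary spheres $\partial U$ as pairwise non-crossing. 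A preliminary normalization step: perturb each ball of $\cb^A$ and $\cb^Z$ slightly (keeping the induced topology on $A$, resp.\ $Z$, and nestedness) so that all the bounding spheres $\partial U$ are piecewise flat and pairwise disjoint; then $\BS^n\sm A$ is exhausted by the compact regions cut out between consecutive levels.

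The core of the argument is to build $h'$ as a limit of partial homeomorphisms defined on larger and larger ``collars''. At stage $i$ I will have a homeomorphism $h'_i$ from the closure of the union of the level-$\le i$ complementary regions of $A$ onto the corresponding region of $Z$, agreeing with $h$ on the relevant pieces of $A$ that have been ``swallowed'', and mapping each level-$i$ bounding sphere $\partial U$ ($U\in\cb^A$) onto $\partial(\psi_i(U))$ where $\psi_i$ is the bijection on level-$i$ balls induced by $h$ (this bijection exists because $h$ is a homeomorphism $A\to Z$ and the bases are nested, so the level structure is topologically intrinsic). The key obstacle is that $h$ need not send a single ball $U\in\cb^A$ to a single ball of $\cb^Z$ — it sends $U\cap A$ homeomorphically onto a clopen subset of $Z$ which may meet many balls of $\cb^Z$. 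This is precisely the difficulty flagged in the introduction, and the fix is the ``engulfing'' remark made there: the finitely many balls of $\cb^Z$ meeting $h(U\cap A)$ at the current level can be enclosed in a single topological ball $\widetilde U$ (take a slightly larger ball, or a tubular neighbourhood of their union), and symmetrically in the other direction; the region of $\BS^n\sm Z$ between $\widetilde U$ and those smaller balls is then a punctured sphere. So at each stage the region to be matched, on both sides, is a finite disjoint union of $k$-punctured spheres, and I invoke \Prr{hom pun} to extend the already-constructed boundary homeomorphism across each of them, choosing the orientation to match; this produces $h'_{i+1}$ extending $h'_i$.

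Finally I would verify convergence and continuity. Because the radii of the level-$i$ balls tend to $0$ uniformly (this can be arranged in \Lr{lem nted}, and preserved by the engulfing since a ball engulfing finitely many balls of radius $<\epsilon$ that are pairwise close can be taken of radius $<C\epsilon$ — here I must be slightly careful and interleave the two bases so that diameters on both sides shrink), the nested compact regions assigned to a given point of $A$ shrink to that point, and likewise on the $Z$ side; hence the maps $h'_i$ extend to a well-defined bijection $h':\BS^n\to\BS^n$ with $h'|_A=h$. Continuity at points of $\BS^n\sm A$ is clear since $h'$ is locally one of the $h'_i$; continuity at a point $a\in A$ follows from the shrinking-diameter property on both sides, and then $h'$ is a continuous bijection of a compact Hausdorff space, hence a homeomorphism by \LemInjHomC. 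The orientation clause is handled by making a single global choice at stage $0$ (orientation-reversing or preserving) and then, at every application of \Prr{hom pun}, extending consistently with that choice, which its ``moreover'' clause permits. I expect the main technical obstacle to be bookkeeping the two interleaved level structures so that the engulfing balls on each side still shrink to points and still respect nestedness; everything else is an assembly of \Prr{hom pun} and \Lr{lem nted}.
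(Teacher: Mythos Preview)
Your proposal is correct and takes essentially the same route as the paper: ping-pong between punctured-sphere regions cut out by the nested bases, engulf the finitely many target balls into a single topological ball (the paper does this concretely by joining them with thin tubes along piecewise-linear arcs), and extend across each region via \Prr{hom pun}, with the orientation fixed once at stage~$0$. The paper's resolution of your acknowledged bookkeeping obstacle is a strict alternation---at odd steps the small metric balls are chosen from $\cb^A$ and the engulfing tubes are built on the $Z$ side, at even steps the roles swap---so that the possibly-enlarged engulfed balls on either side inherit the radius bound from the \emph{previous} step and hence still shrink; one small slip to drop is the sentence claiming the level structure is ``topologically intrinsic'' and that $h$ induces a bijection $\psi_i$ on level-$i$ balls, since you immediately (and correctly) identify its failure as the key obstacle.
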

\begin{proof}
We will construct $h'$ inductively, in infinitely many steps, with each step extending the previous one by mapping punctured spheres of \smA\ to punctured spheres of \smZ\ via \Lr{hom pun}. The punctures will have radii converging to 0, which will ensure that eventually all of \smA\ is mapped onto \smZ.

To start our inductive process, we choose two closed metric balls $M_0,M_0'$ in $\BS^n$ that avoid $A,Z$, respectively, and a homeomorphism $h_0: M_0 \to M_0'$. In fact, since $A\cup Z$ is closed, we can let $M_0 = M_0'$ be a small enough ball of any point outside $A\cup Z$, and we can let $h_0$ be the identity or any orientation reversing isometry of $M_0$. We make the latter choice depending on whether we want the homeomorphism $h'$ in the statement to be orientation preserving or reversing.

For the inductive step $n=1,2,\ldots$, assume that we have already defined a homeomorphism $h_{n-1}: M_{n-1} \to M'_{n-1}$ between punctured spheres of $\BS^n$ with the following properties. For a punctured sphere $M$, we let $\cb\cc (M)$ denote the set of boundary components of $M$.
\begin{enumerate}
\item \label{h i} $M_{n-1}$ 
  is a punctured sphere in \smA; 
is the boundary of a ball in $\cb^A$; 
\item \label{h iii} each $S \in \cb\cc (M_{n-1})$ has radius at most $1/n$ (any sequence converging to 0 would do).

Given $S \in \cb\cc (M_{n-1})$, notice that exactly one of the two components of $\BS^n \sm S$ meets $M_{n-1}$; we let $\ocirc{S}$ denote the other component (which is a 1-punctured sphere). Notice that $h_{n-1}(S)$ must coincide with one of the boundary components of $M'_{n-1}$. Thus $h_{n-1}$ induces a bijection $S \mapsto S'$ from $\cb\cc (M_{n-1})$ to $\cb\cc (M'_{n-1})$. Our next property says that this bijection respects $h$:

\item \label{h iv} \fe\ $S  \in \cb\cc (M_{n-1})$, we have $h(\ocirc{S} \cap A) = \ocirc{S'} \cap Z$
\end{enumerate}

Moreover, we assume ---and ensure inductively--- that properties \ref{h i}--\ref{h iv} hold with the roles of $A, M_{n-1}, h_{n-1}$  exchanged with $Z, M'_{n-1}, h^{-1}_{n-1}$, respectively. 

Notice that these properties are satisfied for $n=1$. In particular, there is a unique $S \in \cb\cc (M_{n-1})$, and we have $\ocirc{S} \cap A = A$ and $\ocirc{S'} \cap Z = Z$.

Assuming that such an $h_{n-1}$ has already been defined, we proceed to define $M_n$ and $h_n$ as follows. If $n$ is odd, \fe\ $S  \in \cb\cc (M_{n-1})$, let $C_S$ be a cover of $\ocirc{S} \cap A$ by elements of $\cb^A$ of radius  at most $1/(n+2)$ that are contained in $\ocirc{S}$. Since $A$, and hence $\ocirc{S} \cap A$, is compact, we may assume that $C_S$ is finite. Moreover, its elements can be chosen to be  pairwise disjoint since  $\cb^A$ is \nted. Let $M_S:= \ocirc{S} \sm \bigcup C_S$, and notice that $M_S$ is a $k$-punctured sphere with $k:= 1+|C_S|$; indeed, the boundary components of $M_S$ are $\partial S$ and the boundaries of the balls in $C_S$.

We are looking for an appropriate $k$-punctured sphere $M'_S$ in $\smZ$ to map $M_S$ onto. To find it, for every $U\in C_S$, let $C_U$ be a finite cover of $h(\ocirc{U} \cap A)$ by pairwise disjoint elements of $\cb^Z$, chosen small enough that \ref{h iii} is satisfied and $\bigcup C_U \subset \ocirc{S'}$ holds; the latter is possible  by \ref{h iv}. Notice that $\ocirc{S'} \sm \bigcup_{U\in C_S} C_U$ is a $k'$-punctured sphere, but we cannot yet use it as the desired $M'_S$ because $k'$ will typically be larger than $k$. To amend this, we combine, for each $U\in C_S$, the balls in $C_U$ into one topological ball $S_U$ as follows.  First, we join the elements of $C_U$ by \pl\ arcs in $\ocirc{S'}$, so as to form a connected and simply connected set (we have to use exactly $|C_U|-1$ arcs for this). Then, we blow up each of these arcs into a sufficiently small tube consisting of open metric balls. It is not hard to do this so that the resulting union $S_U$ of  $\bigcup C_U$ and all these tubes is homeomorphic to a ball in $\BS^n$, and so that $S_U$ is disjoint from $S_V$ for $U\neq V \in C_S$ (\fig{figSU}). Moreover, it is easy to ensure that $\partial S_U$ is \pfl. Here, we used the fact that the elements of $C_U$ are pairwise disjoint.
We can now let $M'_S:= \ocirc{S'} \sm \bigcup_{U\in C_S} S_U$, which is a $k$-punctured sphere as desired.

\begin{figure} 
\begin{center}
\begin{overpic}[width=0.8\linewidth]{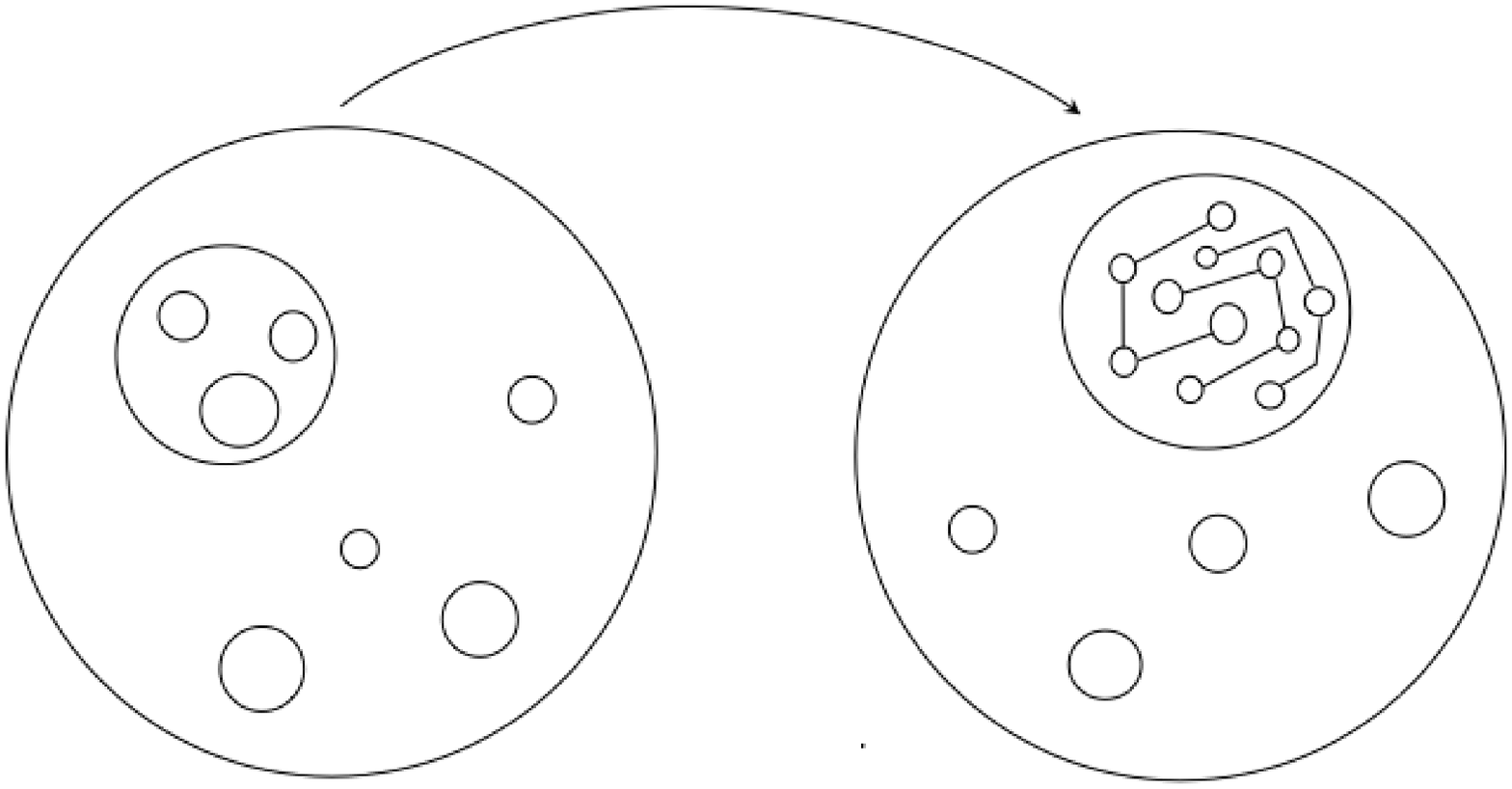} 
\put(22,34){$S$}
\put(65,35){$S'$}
\put(1,43){$M_{n-1}$}
\put(45,55){$h_{n-1}$}
\put(15,25){$U$}
\end{overpic}
\end{center}
\caption{Mapping the punctured sphere $M_S$ onto a punctured sphere $M'_S$ inside $S'$. Each element $U$ of $C_S$ (there are 3 in this example picture) gives rise to a topological ball $S_U$, obtained by joining metric balls with tubes.}
\label{figSU}
\end{figure}

Let $M_n$ be the union of $M_{n-1}$ with all these punctured spheres of the form $M_S$ for $S\in \cb \cc(M_{n-1})$. Recall that $h_{n-1}$ restricts to a homeomorphism from $S$ onto $S'$, which is one of the punctures of $M'_S$. Applying \Prr{hom pun} to each $S\in \cb \cc(M_{n-1})$, we thus obtain an extension $h_n$ of $h_{n-1}$ to  $M_n$, with image $M'_n:= M'_{n-1} \cup \bigcup_{S\in \cb \cc(M_{n-1})} M'_S$. Moreover, 
\labtequ{orient}{$h_n$ will be orientation preserving \iff\ $h_{n-1}$ is,}
because this property is witnessed by the restriction of $h_{n-1}$ to any $S\in \cb \cc(M_{n-1})$.

This completes the definition of $h_n$ for odd $n$. 
For even $n$ we repeat the construction verbatim, except that we exchange the roles of $A$ and $Z$; this will define homeomorphisms from  punctured spheres of $\BS^n \sm (Z \cup  M'_{n-1})$ to punctured spheres of $\BS^n \sm (A \cup  M_{n-1})$, and we use the inverses of these homeomorphisms to extend $h_{n-1}$ into $h_n$.
(The reason for treating odd and even $n$ differently is to make sure that the aforementioned tubes do not cause a problem by converging to points in \smA, which points would then fail to be in the image of $\bigcup_{\nin} h_n$.)

Note that \ref{h i} is satisfied, because $M_n$ and $M'_n$ have been obtained by glueing internally disjoint punctured spheres along a common boundary component. For $n$ odd, $M_n$ satisfies \ref{h iii} because each of  its boundary components 
coincides, by construction, with the boundary $\partial U$ of some element of $\cb^A$ of radius  at most $1/(n+2)$, which is less than the desired $1/(n+1)$. The boundary components of $M'_n$ can be bigger because of the tubes, but still they inherit the  desired bound from the previous (even) step. For even $n$ the same argument with the roles interchanged applies.
To see that \ref{h iv} is satisfied, notice that $\cb \cc(M_{n})$ consists of spheres $U\in C_S$ as in the above construction, and we have $h_n(\partial{U})=\partial{S_U}$; in other words, we have $\partial{U}' = \partial{S_U}$. Moreover, we have  $\ocirc{S_U}\cap Z = \bigcup C_U= h(\ocirc{U}\cap A)$ as requirered by \ref{h iv}. 

\medskip
Having defined $h_n$ for all $n$, we let $h':= h \cup \bigcup_{\nin} h_n$. 

Notice that every point $p\in \smA$ is eventually in $M_n$, hence in the domain of $h'$, because of \ref{h iii}, the fact that $d(p,A)>0$ as $A$ is closed, and the fact that  $p\in \ocirc{S}$ for some $S\in \cb \cc(M_{n})$ \fe\ $n$ by \ref{h i}. By the same argument, $h'$ surjects onto $\BS^n$.

Easily, $\bigcup_{\nin} h_n$ is a homeomorphism, because it is one locally by the construction of the $h_n$. It is straightforward to check that $h'$ is a homeomorphism at any $p\in \smA$ too, because arbitrarily small elements $U$ of $\cb^A$ containing $p$ are mapped to $h(U)$ by \ref{h iv}.

The final statement about orientation follows from \eqref{orient} and the fact that $h_0$ gives us the desired choice, as mentioned above.

\end{proof}

\begin{remark}
In \Tr{main thm}, we can construct $h'$ to be \pl\ (or smooth), because all objects involved in its construction can be chosen to be \pl.
\end{remark}


Recall that a Cantor set $C \subset \mathbb{S}^n$ is called \defi{tame}, if $\mathbb{S}^n \sm C$ is homeomorphic to the complement of a standard Cantor set in  $\mathbb{S}^n$. It is well-known that $C$ is tame \iff\ there is an automorphism $h: \mathbb{S}^n \to \mathbb{S}^n$ such that $h(C)$ lies in a piecewise-linear arc \cite{BinTam}. Easily, $h(C)$ admits a \nted\ basis in this case, and so \Tr{main thm unct} provides another proof of the well-known fact that every tame Cantor set is $C$ strongly homogeneously embedded \cite[p.~93]{MoiseGT}\footnote{Moise \cite{MoiseGT} proves this in 2-dimensions, but the higher dimensional case follows easily by induction, because we may assume that $C$ is contained in the equator of $S^n$ using the aforementioned  automorphism $h$.}. 

It follows from \Tr{main thm} that for every countable, compact, subset $Z$ of $\BS^n, n\geq 2$, \ti\ an automorphism $h$ of $\BS^n$  \st\ $h(Z)$ lies in a linear arc. This is because every such space $Z$ ---more generally, every zero-dimensional separable metric space--- is homeomorphic with a subset of a real interval \cite[1.3.17]{EngelkingDT}.


\section{A converse}

In the converse direction of \Tr{main thm unct}, we remark that if $A, Z$ are two compact, totally disconnected subsets of $\BS^n$ \st\ $\smA$ is homeomorphic to $\smZ$, then $A$ is homeomorphic to $Z$. One way to see this is to use the fact that homeomorphic spaces have homeomorphic Freudenthal compactifications, combined with

\begin{proposition} \label{prof Freu}
Let $A$ be a compact and totally disconnected subspace of $\BS^n$. Then the identity map on $\smA$ extends to a homeomorphism between the Freudenthal compactification of \smA\ and $\BS^n$.
\end{proposition}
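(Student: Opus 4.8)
The plan is to show that $\BS^n$ \emph{is} the Freudenthal compactification of $X:=\smA$, with the identity on $X$ as the identifying map. Write $FX$ for the Freudenthal compactification of $X$. First I would record the ambient facts. Since $A$ is compact and totally disconnected it is zero-dimensional, and as $n\ge 2$ we have $\dim A=0\le n-2$; hence by the classical non-separation theorem for closed sets of dimension $\le n-2$ in a manifold \cite{EngelkingDT} (and, for $\smA$ itself, also by Alexander duality), $A$ disconnects no region of $\BS^n$. Concretely: $X$ is connected, and for every $a\in A$ and every $\varepsilon>0$ the set $B_\varepsilon(a)\sm A=B_\varepsilon(a)\sm\bigl(A\cap B_\varepsilon(a)\bigr)$ is connected, and nonempty since $A$ has empty interior. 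Moreover $X$ is locally compact, locally connected, $\sigma$-compact, rim-compact and Hausdorff, being an open subset of the compact manifold $\BS^n$; so $FX$ is a well-defined compact metrizable space, and it is classical that $FX$ is the \emph{largest} compactification of $X$ whose remainder is totally disconnected (equivalently here, zero-dimensional, as all remainders in question are compact metrizable). Since $\BS^n$ is a compactification of $X$ with totally disconnected remainder $A$, this already provides a continuous map $FX\to\BS^n$ fixing $X$ pointwise; the real task is to construct a continuous map the other way.

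To build a continuous $\Psi\colon\BS^n\to FX$ extending $\mathrm{id}_X$, fix the compact exhaustion $K_m:=\{x\in\BS^n:d(x,A)\ge 1/m\}$ of $X$, so that the points of $FX\sm X$ are the ends of $X$, i.e.\ the threads of $\varprojlim_m\pi_0(U_m)$ with $U_m:=\{x\in X:d(x,A)<1/m\}$. The crux is that every $a\in A$ determines a \emph{unique} end: for each $m$ the connected set $B_{1/m}(a)\sm A$ lies in exactly one component $D_m(a)$ of $U_m$, and $D_m(a)$ is the only component of $U_m$ with $a$ in its closure, since any such component must meet, hence contain, $B_\varepsilon(a)\sm A$ for small $\varepsilon$. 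The $D_m(a)$ are nested in $m$ and so define an end $\omega_a$; set $\Psi|_X:=\mathrm{id}_X$ and $\Psi(a):=\omega_a$. Continuity at points of $X$ is automatic, $X$ being open in both spaces. For continuity at $a\in A$: a basic neighbourhood of $\omega_a$ in $FX$ contains some $\widehat{D_m(a)}:=D_m(a)\cup\{\text{ends threading through }D_m(a)\}$, and $\Psi\bigl(B_{1/m}(a)\bigr)\subseteq\widehat{D_m(a)}$ — indeed $B_{1/m}(a)\cap X\subseteq D_m(a)$, while for $a'\in B_{1/m}(a)\cap A$ the connected set $B_{1/m}(a)\sm A$ witnesses $D_m(a')=D_m(a)$, so $\omega_{a'}$ threads through $D_m(a)$.

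With $\Psi$ in hand, $\BS^n$ dominates $FX$ as a compactification of $X$, while the universal property of $FX$ gives the reverse domination; two mutually dominating Hausdorff compactifications of a dense subspace are equivalent, and the equivalence is the unique continuous extension of $\mathrm{id}_X$, namely $\Psi$ — which is therefore a homeomorphism. Hence $\Psi^{-1}\colon FX\to\BS^n$ is a homeomorphism restricting to the identity on $\smA$, as claimed. I expect the main obstacle to be the uniqueness of the end at each $a\in A$ and the continuity of $a\mapsto\omega_a$: both hinge on $B_\varepsilon(a)\sm A$ being connected, which is exactly where the hypotheses $n\ge 2$ and the total disconnectedness of $A$ enter. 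A secondary point requiring care is to invoke the non-separation theorem and the maximality of the Freudenthal compactification in precisely the forms needed — in particular for the relatively closed (not necessarily compact) set $A\cap B_\varepsilon(a)$, and for a remainder that is a priori only totally disconnected.
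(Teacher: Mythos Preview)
The paper does not actually prove this proposition. After stating it, the only justification offered is the sentence ``Proving \Prr{prof Freu} is easy when $A$ admits a \nted\ basis of open metric balls of $\BS^n$, and slightly more difficult in general.'' (The source file contains the beginning of a sketch inside a \texttt{\textbackslash comment} block, hence never compiled: it takes two points $x,y\in A$, splits $A=U\cup V$ into disjoint relatively open pieces with $x\in U$, $y\in V$, sets $\epsilon:=d(U,V)$, and aims to produce a compact connected subset of $\smA$ separating $x$ from $y$ --- but the sketch breaks off mid-sentence.)

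Your argument is correct and supplies what the paper omits. The decisive step --- that $B_\varepsilon(a)\setminus A$ is connected for every $a\in A$ and every $\varepsilon>0$ --- is precisely the ``slightly more difficult'' point the paper alludes to: when $A$ has a \nted\ basis of metric balls, the boundaries of those balls already furnish the compact connected separators and the connected punctured neighbourhoods one needs, so no dimension theory is required; in the general totally disconnected case one must invoke, as you do, the non-separation theorem for closed sets of codimension at least $2$. Your packaging via the maximality of the Freudenthal compactification among compactifications with totally disconnected remainder, together with an explicit continuous inverse $\Psi\colon \BS^n\to FX$, is tidier than the point-pair separator approach hinted at in the suppressed sketch, and it makes both the bijection $a\mapsto\omega_a$ between $A$ and the end space of $\smA$ and its continuity completely transparent. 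The one place to be a little careful is exactly where you flag it: the connectedness of $B_\varepsilon(a)\setminus A$ uses that $A\cap B_\varepsilon(a)$ is closed in the open ball $B_\varepsilon(a)$ (an $n$-manifold) and has dimension $0\le n-2$, which is the correct hypothesis for the non-separation theorem in a manifold.
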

In particular, $A$ is homeomorphic to the space of ends of \smA. Proving \Prr{prof Freu} is easy when $A$ admits a \nted\ basis of open metric balls of $\BS^n$, and slightly more difficult in general. 

\comment{
\begin{proof}[Proof of \Prr{prof Freu} (sketch)]
Given $x,y\in A$, we will find a compact and connected subspace of \smA\ separating $x$ from $y$ in $\BS^n$. The rest of the proof is straightforward.

Since $A$ is totally disconnected, there are disjoint open subsets $U \ni x, V\ni y$ of $A$ \st\ $A=U\cup V$. Let $\epsilon:= d(U,V)$. Let $C$ be a cover of $A$ by 
\end{proof}
}

\section{Final remarks}
A topological space homeomorphic to the complement of a tame Cantor set in $\BS^3$ is called a \defi{\CtS}. We remark that this space is important, as it is the only \sico, infinitely-ended, open 3-manifold that can cover a closed 3-manifold; see \cite{GeoKon} for details.

The interested reader will find many interesting problems on wild Cantor sets in \cite{GarRepCan}.

\medskip
In this paper we worked with $\BS^n$ as the host space. 
How far can we extend \Tr{main thm} beyond spheres?
\begin{problem}
For which $n$ is it true that for every $n$-manifold $M$, and every two countable, compact, homeomorphic subsets $A,Z$ of $M$, the complements $M\sm A, M\sm Z$ are homeomorphic?
\end{problem}

\Tr{main thm} tells us that we can extend each homeomorphism $h: A \to Z$ to a homeomorphism between their host spaces. It could be interesting to try to extend several such homeomorphisms simultaneously, so that they form a specific group. We propose two problems of this kind:
\begin{problem}
Let $Z$ be a countable subset of $\R^n,n\geq 2$ with no accumulation points in $\R^n$. Which groups of bijections of $Z$ extend to groups of automorphisms of $\R^n$?
\end{problem}

For example, suppose $n=3$ and $\pi$ is a bijection of $Z$ that has a finite cycle $(z_1,z_2, \ldots, z_k)$ and fixes all other elements of $Z$ pointwise. Let $\Gamma$ be the finite cyclic group generated by $\pi$. Then it is possible to realise $\Gamma$ as a group of homeomorphisms of $\R^3$ as follows. Find a 2-way infinite arc $A$ that contains all of $Z$ except $\{z_1,z_2, \ldots, z_k\}$, and find a rotation $h$ of $\R^3$ around $A$ that maps each $z_i$ to $z_{i+1 \pmod k}$. Then $h$ generates a finite cyclic group that acts on $Z$ the same way as  $\Gamma$.

\begin{problem}
Is there $\nin$ \st\ for every  countable, closed subset  $Z$  of $\BS^n$, every group of automorphisms of $Z$  extends into a group of automorphisms of $\BS^n$?
\end{problem}

Call a subset  $Z$ of $\BS^n$ \defi{ambiently-reversible}, if there is an orientation-reversing automorphism $h: \BS^n \to \BS^n$ fixing $Z$ pointwise. Is there an ambiently-reversible wild Cantor set in $\BS^n, n \geq 3$? Are all Cantor sets in $\BS^n$ ambiently-reversible? More generally, we have 

\begin{problem}
Which subsets of $\BS^n, n \geq 2$ are ambiently-reversible?
\end{problem}

\bibliographystyle{plain}
\bibliography{../collective}

\end{document}